\def\@tocline#1#2#3#4#5#6#7{\relax
  \ifnum #1>\c@tocdepth 
  \else
    \par \addpenalty\@secpenalty\addvspace{#2}%
    \begingroup \hyphenpenalty\@M
    \@ifempty{#4}{%
      \@tempdima\csname r@tocindent\number#1\endcsname\relax
    }{%
      \@tempdima#4\relax
    }%
    \parindent\z@ \leftskip#3\relax \advance\leftskip\@tempdima\relax
    \rightskip\@pnumwidth plus4em \parfillskip-\@pnumwidth
    #5\leavevmode\hskip-\@tempdima
      \ifcase #1
      \or\or \hskip 2em \or \hskip 2homologyem \else \hskip 3em \fi%
      #6\nobreak\relax
    \dotfill\hbox to\@pnumwidth{\@tocpagenum{#7}}\par
    \nobreak
    \endgroup
  \fi}
\theoremstyle{plain}
\newtheorem{theorem}{Theorem}[section]
\newtheorem{lemma}[theorem]{Lemma}
\newtheorem{proposition}[theorem]{Proposition}
\theoremstyle{definition}
\newtheorem{remark}[theorem]{Remark}
\newtheorem{definition}[theorem]{Definition}
\numberwithin{equation}{section}
\newcommand{\dlim}{\mathop{\varinjlim}\limits}  
\newcommand{\inj}{\hookrightarrow}
\newcommand{\Spec}{{\rm Spec \,}}
\renewcommand{\tilde}{\widetilde}
\newcommand{\Z}{{\mathbb Z}}
\newcommand{\KM}{{K}^{\rm M}}                      
\newcommand{\A}{\mathbb A}
\newcommand{\calK}{\mathcal K}
\newcommand{\sO}{\mathcal O}
\def\<{\langle}
\def\>{\rangle} 
\def\-{\overline} 
\def\~{\widetilde}
\def\^{\widehat}
\def\fr{\mathfrak}
\def\@{\mathcal}
\def\!{\mathscr}
\def\#{\mathbb}
\def\&{\mathbf}
\def\_{\underline}
\def\Dot{\bullet}
\begin{document}

\title{A remark on Gersten complex for Milnor $K$-theory}

\author{Rakesh Pawar}
\address{Department of Mathematics, Indian Institute of Science and Education Research (IISER), Mohali, Knowledge city, Sector 81, Manauli, PO, Sahibzada Ajit Singh Nagar, Punjab-140306, India}
\email{rakesh.pawar.math@gmail.com}
\address{Department of Mathematical Sciences, Indian Institute of Science Education and Research Pune, Dr. Homi Bhabha Road, Pashan, Pune 411008, India.}
\email{ rakesh.pawar@acads.iiserpune.ac.in}
\date{\today}

\subjclass[2010]{14F20, 14F42, 19D45 (Primary)}

\begin{abstract} In this note, we consider the Gersten complex for Milnor $K$-theory over a regular local Henselian domain $S$ and prove that in degrees $\geq \dim S\geq 1$, the Gersten complex of an essentially smooth Henselian local $S$-scheme is exact.
\end{abstract}

\maketitle

\section{Introduction}

Let $X$ be an excellent scheme as in~\cite[\S 7, 7.8]{EGA4}. In~\cite{kato86} Kato considered the sequence of abelian groups 
\begin{equation}\label{GC}
\bigoplus_{\eta\in X^{(0)}}\KM_n(\kappa(\eta))\to \bigoplus_{x\in X^{(1)}}\KM_{n-1}(\kappa(x))\to \cdots \to \bigoplus_{x\in X^{(d)}}\KM_{n-d}(\kappa(x))\to\cdots\to 0
\end{equation}
for Milnor K-theory (similar to the Gersten complex for Quillen's algebraic K-theory), and showed that for $X$, this sequence of abelian groups is a complex~\cite[Proposition 1]{kato86}.  Here $X^{(p)}$ denotes the set of codimension $p$ points of $X$. 

According to the Gersten conjecture for Milnor K-theory, for any Noetherian regular local ring $A$ of dimension $d$, the complex (which we refer to as \textit{Gersten complex})  
\begin{equation*}
0\to\KM_n(A)\to \KM_n(Frac(A))\to \bigoplus_{\fr p\in \Spec A^{(1)}}\KM_{n-1}(\kappa(\fr p))\to \cdots \to \bigoplus_{\fr p\in \Spec A^{(d)}}\KM_{n-d}(\kappa(\fr p))\to 0
\end{equation*}
is exact, for each $n\geq 0$. Here $Frac(A)$ denotes the field of fractions of $A$ and $\Spec A^{(p)}$ denotes the set of codimension $p$ points of $\Spec A$. 
For $A$ containing a field, Gersten conjecture has been verified by M. Kerz~\cite{kerz09, kerz10}. More  recently, M. L\"uders proved the following relative version.
\begin{theorem}(\cite[Theorem 1.1]{lueders20})
Let $X=\Spec A$ be the spectrum of a smooth local ring $A$ over an excellent discrete valuation ring, then the complex
\begin{equation*}
 \KM_n(A)\to\KM_n(Frac(X))\to \bigoplus_{x\in X^{(1)}}\KM_{n-1}(\kappa(x))\to \cdots \to \bigoplus_{x\in X^{(d)}}\KM_{n-d}(\kappa(x))\to 0. \end{equation*}
is exact in degrees $>0$ for each $n\geq 0$.
Furthermore, it is exact and the first map is injective if the exactness on the left holds for all discrete valuation rings.
\end{theorem}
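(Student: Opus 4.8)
The plan is to follow Quillen's method for Gersten's conjecture, in the axiomatic form of Colliot-Th\'el\`ene--Hoobler--Kahn, adapted to the relative situation over the base discrete valuation ring. Write $X=\Spec A$, $d=\Dim A$, and let $x$ be the closed point. Exactness of the displayed complex in degrees $>0$ is equivalent, through the coniveau (Cousin) reformulation, to good behaviour of the cohomology-with-supports maps for the Milnor $K$-theory sheaf, and this reduces to two local inputs: \emph{homotopy invariance} of Milnor $K$-theory, available from Kerz's work, and a \emph{geometric presentation lemma} in the relative setting. I would run the argument as an induction on $d$.

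The decisive geometric step is a relative Gabber presentation lemma over the base. Given $A$ together with a closed subset $Z\subsetneq X$ whose position relative to $x$ is controlled, I would, after a Nisnevich-local modification, produce a projection exhibiting a neighbourhood of $x$ as finite over the relative affine space $\A^{d-1}_{\mathcal O}$ and \'etale near $Z$, with $Z$ mapping finitely and restricting to a closed immersion. This is the relative analogue of Quillen's trick; its purpose is to transport the local computation to $\A^{d-1}_{\mathcal O}$ by means of the transfer (norm) maps on Milnor $K$-theory along the finite projection, whose functoriality is supplied by the Milnor--Bass--Tate formalism.

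With the presentation in place, homotopy invariance collapses the contribution of the extra affine variable; this yields exactness at each spot of degree $>0$ and lowers $d$ by one, so iterating reduces the problem to the case where $A$ is essentially finite over $\mathcal O$, i.e. relative dimension zero. Positive-degree exactness survives the descent because the presentation always supplies the surjectivity needed at the top of the complex. Injectivity of the first map $\KM_n(A)\to\KM_n(\mathrm{Frac}(X))$, however, is \emph{not} produced by this dimension induction: its base case is precisely left-exactness of the Gersten complex of the discrete valuation ring itself. This structural fact is exactly what forces the second assertion of the theorem to be stated conditionally, under the hypothesis that left-exactness holds for all discrete valuation rings.

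I expect the main obstacle to be the relative presentation lemma in mixed characteristic. Over the special fibre the residue field of the base has characteristic $p$ and may be imperfect or finite, so the genericity arguments that make Gabber's lemma work over an infinite field must be replaced by passage to residue-field extensions followed by descent via a norm/transfer argument; arranging these extensions compatibly with the base structure, and checking that the transfers glue correctly across the generic and special fibres, is the technical heart of the proof. It is here that the equicharacteristic results of Kerz enter fibrewise as a black box.
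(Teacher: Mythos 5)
Your proposal does not follow the route of the actual proof, and it hinges on an ingredient that is not available. The theorem you are proving is quoted from L\"uders, and the method used there --- and reproduced in this paper's Theorem~\ref{main}, which recovers the statement when $\dim S=1$ --- is not the Quillen/Colliot-Th\'el\`ene--Hoobler--Kahn induction on dimension. It is Rost's effaceability argument run directly in the colimit over Nisnevich neighbourhoods of the (Henselian) local scheme: first, a support-moving step shows that every class in $A^p(U',\KM_n)$ is represented by a cycle supported on a closed subscheme $Y$ with $\dim Y_s<\dim U'_s$; this step uses Saito--Sato's Lemma~7.2 to produce a curve $Z$ through the bad point, the observation that $Z_s^{(1)}$ is finite, and the Chinese remainder theorem together with a local parameter $\pi$ to write the given symbol as a boundary modulo classes in good position. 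Second, once $\dim Y_s<\dim U'_s$, the relative Gabber presentation lemma (DHKS) yields a projection $\pi\colon X'\to\A^{d-1}_S$ finite on $Y$, and the explicit chain homotopy $H=p_*\circ j_*\circ\{t\}\circ q^*$, satisfying $d_{X'}\circ H+H\circ d_{Y'}=i_*$, shows that $A^p_{Y}(X',\KM_n)\to A^p(X',\KM_n)$ vanishes. No homotopy invariance statement is invoked anywhere, and there is no induction on $d$.

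The genuine gap in your sketch is the step ``homotopy invariance collapses the contribution of the extra affine variable.'' Kerz's homotopy invariance for Milnor $K$-theory is an equicharacteristic result (over fields); what your induction needs is $\A^1$-invariance of the Gersten cohomology $A^p(-,\KM_n)$ \emph{over the mixed-characteristic base} $\Spec\mathcal O$, i.e. $A^p(\A^{d-1}_{\mathcal O},\KM_n)\cong A^p(\A^{d-2}_{\mathcal O},\KM_n)$ or the corresponding effaceability axiom in the CTHK framework. This is not known in mixed characteristic and is essentially equivalent to the relative Gersten statement you are trying to prove, so the proposed dimension induction is circular at exactly the point where it must do work. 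This is why L\"uders and the present paper replace the sheaf-theoretic homotopy-invariance input by the hands-on chain homotopy above, and why the Chinese-remainder moving argument is indispensable: it is what arranges the hypothesis $\dim Y_s<\dim X_s$ under which the presentation lemma (and hence the homotopy $H$) applies. Your reading of the second, conditional assertion (left-exactness for DVRs as an irreducible base case) is consistent with L\"uders' statement, but the core positive-degree argument as you outline it would not go through.
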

In this note, we consider the complex~\eqref{GC}, for an excellent scheme $X$ over a base $S=\Spec R$, the spectrum of a regular local Henselian domain $R$. We show that a certain tail (depending on the Krull dimension of $S$) of the Gersten complex is exact. More precisely, we prove the following result.

\begin{theorem}(see Theorem~\ref{main})\label{mainthm}
Let $S=\Spec R$ for $R$ a regular local Henselian domain of Krull dimension $\geq 1$.  
Let $X$ be an essentially smooth Henselian local $S$-scheme of dimension $d$,  then
\begin{equation*} A^p(\KM_n)(X)=0
\end{equation*}
\text{for} $p\geq \dim S\geq 1$ and $n\geq 0.$ More explicitly,
\begin{equation*}
\KM_n(Frac(X))\to \bigoplus_{x\in X^{(1)}}\KM_{n-1}(\kappa(x))\to \cdots \to \bigoplus_{x\in X^{(d)}}\KM_{n-d}(\kappa(x))\to 0
\end{equation*}
is exact in degrees $p\geq \dim S$.
\end{theorem}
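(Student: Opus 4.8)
The plan is to argue by induction on $s=\dim S$, using at each stage the coniveau (localization) long exact sequence attached to a regular parameter of $R$, and feeding in Kerz's theorem over fields \cite{kerz09} for the fibres and Lüders' theorem over a discrete valuation ring \cite{lueders20} as the base of the induction. For the \textbf{base case} $s=1$ the ring $R$ is a Henselian (excellent) discrete valuation ring; an essentially smooth Henselian local $R$-scheme $X$ is a filtered limit of smooth local $R$-schemes, and since $\KM$ and the formation of the Gersten complex commute with filtered colimits, the exactness in degrees $>0$ of \cite[Theorem 1.1]{lueders20} passes to the limit and yields $A^p(\KM_n)(X)=0$ for $p\ge 1=\dim S$.

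For the \textbf{inductive step} assume $s\ge 2$ and the statement for all Henselian regular local base rings of dimension $\le s-1$ (with Kerz's theorem as the dimension-zero, i.e. field, input). Choose a regular parameter $t\in\mathfrak m_R$, so that $\bar R=R/tR$ is Henselian regular local of dimension $s-1$ and $(t)$ is a height-one prime. Let $Z=V(t)\hookrightarrow X$ be the Cartier divisor it cuts out, with open complement $U=X[1/t]$. Sorting the points of $X$ by whether they meet $Z$ gives a degreewise split short exact sequence of Gersten complexes
\begin{equation*}
0\to C^{\bullet-1}(Z,\KM_{n-1})\to C^{\bullet}(X,\KM_n)\to C^{\bullet}(U,\KM_n)\to 0,
\end{equation*}
whose long exact sequence reads
\begin{equation*}
\cdots\to A^{p-1}(\KM_{n-1})(Z)\to A^p(\KM_n)(X)\to A^p(\KM_n)(U)\to A^p(\KM_{n-1})(Z)\to\cdots .
\end{equation*}
Since $Z=X/tX$ is essentially smooth and Henselian local over $\bar R$, the inductive hypothesis kills $A^j(\KM_m)(Z)$ for all $j\ge s-1$; hence for $p\ge s$ both boundary terms vanish and $A^p(\KM_n)(X)\xrightarrow{\ \sim\ }A^p(\KM_n)(U)$. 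The problem is thereby reduced to proving $A^p(\KM_n)(U)=0$ for $p\ge s$, where $U$ is essentially smooth over the $(s-1)$-dimensional regular ring $R[1/t]$.

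To attack $U$ I would pass to the cohomology sheaves $\mathcal H^b$ of its Gersten complex and the spectral sequence $E_2^{a,b}=H^a_{\mathrm{Nis}}(U,\mathcal H^b)\Rightarrow A^{a+b}(\KM_n)(U)$. Each Henselian local ring $\mathcal O^h_{U,u}$ admits, by the universal property of Henselization, a local structure map making it essentially smooth and Henselian local over $R^h_{\mathfrak q}$, where $\mathfrak q$ is the image of $u$ and $\mathrm{ht}\,\mathfrak q\le s-1$. Applying the inductive hypothesis there shows that the stalk $A^b(\KM_n)(\mathcal O^h_{U,u})$ vanishes once $b\ge\max(\mathrm{ht}\,\mathfrak q,1)$; consequently $\mathcal H^b=0$ for $b\ge s-1$, and the intermediate sheaves $\mathcal H^1,\dots,\mathcal H^{s-2}$ are supported over the locus $\{\mathrm{ht}\,\mathfrak q\ge b+1\}$. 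The surviving $b$-strands can be organized by the coniveau spectral sequence of $U\to\Spec R[1/t]$, whose $E_1$-terms are the Nisnevich cohomology of the (smooth over a field) fibres, computed by Kerz's theorem.

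The \textbf{main obstacle} is precisely the control of these surviving strands — above all the $\mathcal H^0=\sKM_n$ strand, but also $\mathcal H^1,\dots,\mathcal H^{s-2}$ — in total degree $a+b\ge s$: on the \emph{non-local} scheme $U$ the groups $H^a_{\mathrm{Nis}}(U,\mathcal H^b)$ are not bounded away by dimension alone in this range. The essential point must be that $U=X[1/t]$ is the complement of a divisor in the Henselian local scheme $X$: the localization sequence in Nisnevich cohomology together with $H^{>0}_{\mathrm{Nis}}(X,-)=0$ should rewrite these groups as cohomology supported on $Z$, which is again Henselian local and accessible to the inductive hypothesis and to Kerz's theorem on the special fibre. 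Making this reduction rigorous — equivalently, establishing enough codimension-one purity for $\sKM_{\bullet}$ relative to the Henselian base and proving that the coniveau spectral sequence degenerates in total degree $\ge s$ — is where the real work lies, and it is exactly the step in which the Henselian hypothesis on $X$ and Lüders' theorem are indispensable.
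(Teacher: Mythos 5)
Your localization step is sound as far as it goes: sorting the points of $X$ by whether they lie on $Z=V(t)$ does give the termwise split exact sequence $0\to C^{\bullet-1}(Z,\KM_{n-1})\to C^{\bullet}(X,\KM_n)\to C^{\bullet}(U,\KM_n)\to 0$, the divisor $Z$ is again essentially smooth and Henselian local over $R/tR$, and the inductive hypothesis then kills both boundary terms in the long exact sequence in degrees $p\geq s$, so $A^p(\KM_n)(X)\cong A^p(\KM_n)(U)$ there. (Two side remarks on your base case: \cite[Theorem 1.1]{lueders20} is stated over an \emph{excellent} discrete valuation ring, and a Henselian DVR need not be excellent, so you should instead quote Lüders' Henselian-local statement, Theorem 3.1 of the same paper, which \emph{is} the $s=1$ case; and the field-level input from \cite{kerz09} carries residue-field hypotheses that your stalks need not satisfy.)

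The genuine gap is in the inductive step itself: it trades one dimension of the base for the locality of the scheme. The scheme $U=X[1/t]$ is neither local nor Henselian, so the inductive hypothesis says nothing about $A^p(\KM_n)(U)$ directly, only about its Nisnevich stalks, and — as your final paragraph concedes — bounding $H^a_{\mathrm{Nis}}(U,\mathcal{H}^b)$ in total degree $\geq s$ is the entire problem: the Nisnevich cohomological dimension of $U$ is controlled by $\dim U=d$, not by $s-1$, so nothing vanishes for free, and the proposed rewriting of these groups as cohomology supported on $Z$ together with the needed codimension-one purity for the Gersten sheaves is itself a Gersten-type statement of the same strength as the theorem; the argument is circular at exactly the crucial point. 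The paper proceeds quite differently and never inducts on $\dim S$: working on a Nisnevich neighbourhood $U'$ of the closed point, it first uses the moving lemma of Saito--Sato \cite[Lemma 7.2]{SS} together with a Chinese-remainder symbol computation with a local parameter $\pi$ to represent any class in $A^p(U',\KM_n)$, $p\geq\dim S$, by a cycle supported on a closed subscheme $Y$ with $\dim Y_s<\dim U'_s$ — the hypothesis $p\geq\dim S$ enters precisely here, forcing $y\in U'_s$ and making the set $Z_s^{(1)}$ finite so the Chinese remainder theorem applies; it then kills such classes Nisnevich-locally via Gabber's presentation lemma over the base (\cite{DHKS20, DHKS21}) and Rost's chain homotopy $d_{X'}\circ H+H\circ d_{Y'}=i_*$ (Proposition~\ref{locvanish}), and concludes by passing to the colimit over Nisnevich neighbourhoods of the closed point. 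If you wanted to salvage your induction, the missing ingredient for the non-local $U$ is exactly such a presentation/moving argument — at which point you would have reproduced the paper's method and the induction on $\dim S$ would be superfluous.
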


We note that when $S$ is of dimension one, we recover M. L\"uders' result~\cite[Theorem 3.1]{lueders20}. The result in \textit{loc. cit.} makes crucial use of the fact that the base is of dimension one. The main contribution of this paper is to realise that Gabber's presentation lemma can be used to by-pass certain results special to dimension one. 

\begin{remark}
We recall some related results in the literature to put our results in the historical context. 
\begin{enumerate}

\item In case the base $S$ is the spectrum of a field, then for a regular local ring over a field, the Gersten conjecture holds for Motivic cohomology~\cite{VSF}, Higher Chow groups~\cite{Bloch Higher Chow groups}, Algebraic K-theory~\cite{Quillen}, and \'etale cohomology~\cite{Bloch-Ogus}. All these results can be extended to equi-characteristic regular local rings by using the techniques of~\cite{Panin}.
\item  The Gersten conjecture holds for Motivic cohomology with finite coefficients for smooth schemes over a discrete valuation ring~\cite[Corollary 4.3]{Geis04}. The Gersten conjecture holds for Algebraic K-theory with finite coefficients~\cite[Theorem 8.2]{Geis-Lev}. Using comparison with Algebraic K-theory, there is a weaker result: for $m$ coprime with $(n-1)!$ (for example $m=p^r$ and $p> n-1$), the Gersten conjecture holds for $\^{\calK}^{M}_n/m$~\cite[Proposition 3.2]{lueders19}, where $\^{\calK}^{M}_n$ denotes the improved Milnor K-theory. This result has been improved in~\cite[Theorem 4.1]{mor-lud} to show that the Gersten conjecture holds for $\^{\calK}^{M}_n/m$, for $m=p^r$, for $p$-Henselian, ind-smooth algebras over a complete discrete valuation ring. 
\end{enumerate}

\end{remark}



\section*{Acknowledgements}
 The author would like to thank Amit Hogadi and Chetan Balwe for constant discussions and input on the earlier drafts of this paper. The author would like to thank the anonymous referee for the comments and suggestions which improved the exposition. Work on this paper began when the author was supported by the Postdoctoral Fellowship position in Indian Institute of Science Education and Research (IISER), Mohali and during its completion the author was supported by IISER Pune and NBHM Postdoctoral Fellowship, awarded by National Board for Higher Mathematics, Department of Atomic Energy, India.

\section{Preliminaries on Rost's cycle complex over a regular base}
In this section we recall the basic notations from ~\cite{kato86, Rost} relevant to this paper.  
Let $S$ be a Noetherian regular scheme, hence an excellent scheme. All $S$-schemes throughout this paper are essentially finite type over $S$, hence are excellent schemes. 

\begin{definition}
Let $X$ denote a finite type $S$-scheme. Let $n\geq 0$. Rost's cycle complex for the Milnor K-theory $\KM_n$ on $X$ is defined as
\begin{equation*}
C^{\Dot}(X, \KM_n): C^0(X, \KM_n)\xrightarrow{d} C^1(X, \KM_n) \xrightarrow{d}\cdots \xrightarrow{d} C^{n-1}(X, \KM_n) \xrightarrow{d} C^n(X, \KM_n) \to 0
\end{equation*}
where 
\begin{equation*} 
C^{q}(X, \KM_n):=\bigoplus_{x\in X^{(q)}}\KM_{n-q}(\kappa(x))
\end{equation*}
 and $X^{(q)}$ is the set of codimension $q$ points of $X$. The differential $d$ is defined as follows:

Let $x\in X^{(q)}$, and $y\in X^{(q+1)}$. For $y\notin \-{\{x\}},$ define $d^x_y=0$. Let $y\in \-{\{x\}}$. Let $Z$ be the normalization of $\-{\{x\}}$. For every $z\in Z^{(1)}$ lying over $y$, we define 
\begin{equation*} d^x_y:=\sum_{\substack{z\in Z^{(1)}\\z\mapsto y}}c^z_y\circ d^x_z 
\end{equation*}
where 
\begin{equation*} d^x_z: \KM_*(\kappa(x))\to \KM_{*-1}(\kappa(z))
\end{equation*}
 is the residue map for Milnor K-theory associated to the discrete valuation ring $\sO_{Z, z}$ with fraction field $\kappa(x)$ and the residue field $\kappa(z)$, and 
\begin{equation*} c^z_y: \KM_*(\kappa(z))\to \KM_{*}(\kappa(y))
\end{equation*}
is the Norm morphism associated to the finite field extension $\kappa(y)\subset\kappa(z)$.
Define the differential 
\begin{equation*} 
d:=\sum_{\substack{(x,y)\\ y\in\-{\{x\}}}} d^x_y: C^q(X, \KM_n)\to C^{q+1}(X, \KM_n). \end{equation*}
We denote the $p^{th}$ cohomology of the complex $C^{\Dot}(X, \KM_n)$ by $A^p(X, \KM_n).$
\end{definition}
\begin{remark}

We recall the notion of cycle complex with support on a closed subscheme.
Let $Z\subset X$ be a closed subscheme of $X$ of codimension $c$ and $U:=X-Z\xrightarrow{j} X$ be the open complement of $Z$ in $X$. Define 
\begin{equation*} 
C^{\Dot}_{Z}(X, \KM_{n}):=\ker\Big(C^{\Dot}(X, \KM_n)\xrightarrow{j^*} C^{\Dot}(U, \KM_n)\Big).
\end{equation*}
We denote by $A^p_Z(X, M)$ the $p^{th}$ cohomology of the chain complex $C^{\Dot}_{Z}(X, \KM_{n}).$
\end{remark}

We also recall some basic properties of the cycle complexes.
\begin{definition}
\begin{enumerate}
\item
Let $f:X\to Y$ be a finite type morphism of equi-dimensional $S$-schemes with $\dim X=d$ and $\dim Y=e$.  Then there is a map 
\begin{equation*} 
f_*: C^q(X, \KM_n)\to C^{e-d+q}(Y, \KM_n)
\end{equation*}
 defined for 
 $\alpha\in \KM_{n-q}(\kappa(x))$ as
 \begin{align*}({f_*})_{xy}(\alpha)=
\begin{cases}
 c^{\kappa(x)}_{\kappa(y)}(\alpha) & \text{if $f(x)=y$ and $\kappa(y)\subset\kappa(x)$ is a finite field extension}\\
 0 & \text{otherwise}
 \end{cases}
 \end{align*}
 where  $c^{\kappa(x)}_{\kappa(y)}:\KM_*(\kappa(x))\to \KM_*(\kappa(y))$ is the Norm morphism for Milnor K-theory of fields.
 \item 
Let $g:Y\to X$ be a flat morphism of equi-dimensional $S$-schemes with constant relative dimension $s$. Then there is a map
\begin{equation*} 
g^*: C^q(X, \KM_n)\to C^{q}(Y, \KM_n)
\end{equation*}
defined for 
 $\alpha\in \KM_{n-q}(\kappa(x))$ as
  \begin{align*}({g^*})_{xy}(\alpha)=
\begin{cases}
 l(\sO_{Y_x, y})i_*(\alpha)      &\text{if $g(y)=x$ and $y$ is a generic point of $Y_x$}\\
 0    & \text{otherwise}
 \end{cases}
 \end{align*}
where $l(\sO_{Y_x, y})$ is the length of the local ring $\sO_{Y_x, y}$ of $Y_x$ at $y$ and $i_*: \KM_*(\kappa(x))\to \KM_*(\kappa(y))$ is the restriction map.
 
 \item For a unit $u\in \sO(X)$, there is a map 
\begin{equation*} 
\{a\}: C^q(X, \KM_n)\to C^{q}(X, \KM_{n+1})
\end{equation*}
 defined by 
\begin{align*}\{a\}_{xy}(\alpha)=
\begin{cases}
 \{a\}\cdot\alpha      &\text{if $x=y$ }\\
 0    & \text{otherwise}
 \end{cases}
 \end{align*}

 \item Let $i: Z\hookrightarrow X$ be a closed subscheme of $X$ and $U:=X-Z\xrightarrow{j} X$ be the open complement of $Z$ in $X$. There are term-wise maps  $j_*:C^q(U, \KM_n)\to C^q(X, \KM_n)$ and  $i^*: C^q(X, \KM_n)\to C^q_Z(X, \KM_n)$, then there is a map
\begin{equation*} 
\partial^U_Z:C^q(U, \KM_n)\to C^{q+1}_Z(X, \KM_n)
 \end{equation*}
 defined as 
\begin{equation*} 
\partial^U_Z:= i^*\circ d_X\circ j_*.
\end{equation*}
 \end{enumerate}
\end{definition}

We recall some properties satisfied by the above maps which will be used in the next section. The properties below are proved in~\cite[Proposition 4.6]{Rost} when the base is the spectrum of a field. When the base in the spectrum of a discrete valuation ring, the properties are proved in~\cite[Lemma 2.7]{lueders20}.
\begin{lemma} 

\noindent\begin{enumerate}
\item For a proper morphism $f:X\to Y$ of $S$-schemes, $f_*\circ d_X=d_Y\circ f_*$.
\item  Let $g:Y\to X$ be a flat morphism of equi-dimensional $S$-schemes with constant relative dimension $s$. Then $g^*\circ d_X=d_Y\circ g^*$.
\item For a unit $u\in \sO(X)$, $\{u\}\circ d_X=-d_X\circ \{u\}$. 
\end{enumerate}

\end{lemma}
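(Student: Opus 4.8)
The plan is to observe that each of the three identities is \emph{local on $X$} and involves only the Milnor $K$-theory of the residue fields $\kappa(x)$ for $x\in X$, the residue maps attached to the discrete valuation rings $\sO_{Z,z}$ arising from normalizations $Z$ of closures $\overline{\{x\}}$, and the norm and restriction maps of finite extensions of these residue fields. None of these ingredients refers to the base $S$: the base enters only through the standing hypothesis that every $S$-scheme in sight is excellent, which is exactly what guarantees that normalizations are finite and that codimension is well behaved, and hence that the differential $d$ together with the maps $f_*,g^*,\{u\}$ are defined exactly as in the case where the base is a field. Consequently I would reduce the whole statement to the algebraic rules satisfied by $\KM_{*}$ as a Rost cycle module --- the functoriality of restriction and norm, the projection formula, the compatibility of residues with finite field extensions, and the graded Leibniz rule for residues --- each of which is a statement about fields and discrete valuation rings established by Milnor, Bass--Tate and Kato and is independent of any base. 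In this way the proofs of~\cite[Proposition 4.6]{Rost} (field base) and~\cite[Lemma 2.7]{lueders20} (discrete valuation ring base) transfer, and the argument I would present is a reduction to those rules, with excellence of the $S$-schemes as the only geometric input replacing the special features of the field or one-dimensional base.

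I would treat the three parts in increasing order of difficulty. For part (3), I would check $\{u\}\circ d_X=-d_X\circ\{u\}$ componentwise on the maps $d^x_y$: since $u\in\sO(X)^{\times}$ restricts to a unit of valuation zero in each $\sO_{Z,z}$, the claim is the graded Leibniz rule $\partial(\{u\}\cdot\alpha)=-\{\bar u\}\cdot\partial(\alpha)$ for the residue map $\partial$ of a discrete valuation, combined with the $\KM_{*}(\kappa(y))$-linearity of the norm map $c^z_y$ (the projection formula, applied to the restriction of $u$); these two produce the required sign. For part (2), I would reduce $g^*\circ d_X=d_Y\circ g^*$ to the compatibility of residues with the flat base change $g$, so that pulling back a codimension-$q$ point and then taking its boundary agrees with taking the boundary first and pulling back, the discrepancy being accounted for precisely by the length multiplicities $l(\sO_{Y_x,y})$ that appear in the definition of $g^*$. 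This is the ramification-versus-length bookkeeping for the extensions of discrete valuation rings induced by $g$, and is again a purely local computation over the residue fields.

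For part (1) --- which I expect to be the main obstacle --- I would first use the factorization of the dimension shift together with additivity over points to reduce the proper morphism $f$ to a finite one, and then to the atomic local situation of a single discrete valuation. In this local model the identity $f_*\circ d_X=d_Y\circ f_*$ becomes Rost's norm--residue compatibility rule, a form of Weil reciprocity: the sum, over the valuations of the normalized closure lying above a given codimension-one point of $Y$, of norm composed with residue equals residue composed with norm. This case is the most delicate because it intertwines the normalization of $\overline{\{x\}}$ with the geometry of $f$ and because the dimension shift in the definition of $f_*$ must be tracked carefully; nevertheless, every step takes place over the residue fields and their discrete valuations, so the computation is insensitive to the nature of $S$, and the reduction to the cited algebraic rules is identical to the field and discrete valuation ring cases.
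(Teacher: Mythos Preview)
Your proposal is correct and is essentially the same approach as the paper's: the paper's entire proof is the single sentence that the argument of~\cite[Lemma~2.7]{lueders20} applies verbatim, and your write-up is an articulation of \emph{why} that verbatim transfer is legitimate (the identities are checked componentwise on residue fields and DVRs, with excellence supplying finiteness of normalization, so nothing depends on the particular base $S$). No further comment is needed.
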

\begin{proof} The proof as in~\cite[Lemma 2.7]{lueders20} applies varbatim.

\end{proof}

\section{Exactness of the Gersten complex for Milnor K-theory}
In this section we will further assume that the base $S$ is the spectrum of a Noetherian Henselian regular local domain and let $s$ denote the closed point of $S$.

\begin{lemma}(~\cite[Lemma 4.5]{Rost})
\label{boundary}
Let $g: Y\to X$ be a smooth $S$-morphism of finite type $S$-schemes of relative dimension 1. Let $\sigma:X\to Y$ be a section to g and let $t\in \mathcal{O}_Y(Y)$ be a global parameter defining the subscheme   $\sigma(X)$. Moreover, let $\tilde{g}:U=Y-\sigma(X)\to X$ be the restriction of g and let $\partial$ be the boundary map associated to the tuple $(X\xrightarrow{\sigma} Y \leftarrow U)$, then 
\begin{equation*}
\partial\circ\tilde{g}^*=0
\end{equation*}
and 
\begin{equation*}
\partial\circ \{t\} \circ \tilde{g}^*=(id_X)_*.
\end{equation*}
\end{lemma}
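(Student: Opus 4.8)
The plan is to check both identities componentwise, reducing each to the standard residue computation in Milnor $K$-theory of a discrete valuation. Since $\sigma$ is a closed immersion with $\sigma(X)$ of codimension $1$ in $Y$, pushforward along $\sigma$ yields an isomorphism $C^q(X,\KM_{m-1}) \xrightarrow{\sim} C^{q+1}_{\sigma(X)}(Y,\KM_m)$, under which I would read off both sides of the claimed equalities; the extra factor $\{t\}$ in the second identity accounts for the Milnor-degree shift. Fix $x \in X^{(q)}$ and a generator $\alpha \in \KM_{n-q}(\kappa(x))$ of the $x$-summand of $C^q(X,\KM_n)$. As $\tilde{g}$ is flat of relative dimension $1$ with smooth fibres, the flat pullback $\tilde{g}^*(\alpha)$ is supported on the generic points of the fibre $U_x = Y_x \setminus \{\sigma(x)\}$, where it equals the image of $\alpha$ under the restriction $\KM_{n-q}(\kappa(x)) \to \KM_{n-q}(\kappa(\eta))$ induced by the inclusion of $\kappa(x)$ as constants in the function field $\kappa(\eta)=\kappa(Y_x)$. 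In particular $\tilde{g}^*(\alpha)$ is a sum of symbols whose entries all lie in the image of $\kappa(x)^\times$.

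I would then unwind $\partial = i^* \circ d_Y \circ j_*$ on this class. The differential $d_Y$ contributes the residues $d^\eta_y$ along the codimension-$(q+1)$ points $y \in \overline{\{\eta\}}=\overline{Y_x}$, and $i^*$ retains only those $y$ lying on $\sigma(X)$. Since $\overline{\{\eta\}}$ dominates $\overline{\{x\}}$ and meets $\sigma(X)\cong X$ with generic intersection point $\sigma(x)$, every other intersection point lies over a proper closed subset of $\overline{\{x\}}$ and so has codimension exceeding $q+1$; hence in the relevant degree only $\sigma(x)$ survives, and $\partial$ reduces to $c^z_{\sigma(x)} \circ d^\eta_z$ summed over the points $z$ of the normalization of $\overline{\{\eta\}}$ above $\sigma(x)$. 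The geometric input is that $\sigma(x)$ is a smooth, hence $\kappa(x)$-rational, point of the fibre curve $Y_x$ at which $t$ restricts to a uniformizer — this is precisely the transversality of the section $\sigma$ to the fibres of the smooth morphism $g$, with $\{t=0\}=\sigma(X)$. Consequently $\overline{\{\eta\}}$ is normal at $\sigma(x)$, there is a unique $z$ over $\sigma(x)$ with $\kappa(z)=\kappa(x)$ and $c^z_{\sigma(x)}=\mathrm{id}$, and $d^\eta_z$ is the residue for $\sO_{Y_x,\sigma(x)}$ with uniformizer $t$ and residue field $\kappa(x)$.

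Both identities now follow from the two basic residue rules. For a symbol $\beta$ all of whose entries are units, the residue vanishes; applied to $\beta = \tilde{g}^*(\alpha)$ this gives $\partial \circ \tilde{g}^* = 0$. For the second identity, $d^\eta_z(\{t\} \cdot \beta)$ equals the reduction $\bar{\beta}$ modulo $t$, and since the entries of $\tilde{g}^*(\alpha)$ already lie in $\kappa(x)^\times = \kappa(z)^\times$ this reduction returns $\alpha$ itself; after the identity norm and the identification above, the $x$-summand of the result is exactly $\alpha$, so $\partial\circ\{t\}\circ\tilde{g}^* = (\mathrm{id}_X)_*$.

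I expect the main obstacle to be the local analysis at $\sigma(x)$: verifying over the general regular Henselian local base $S$ (rather than a field or a discrete valuation ring) that $t$ is genuinely a uniformizer with residue field $\kappa(x)$, and that passage to the normalization contributes a single point carrying the trivial norm. This rests on the smoothness of $g$ together with $\sigma$ being a section cut out by the global parameter $t$, and on the compatibilities of $d$ with $f_*$, $g^*$ and $\{u\}$ recorded in the preceding lemma; granting these, the computation is the verbatim residue calculation of Rost.
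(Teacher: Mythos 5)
Your proposal is correct and takes essentially the same route as the paper, which gives no argument of its own here but cites Rost's Lemma 4.5: your componentwise reduction — only $\sigma(x)$ survives among the codimension-$(q+1)$ points of $\overline{\{\eta\}}\cap\sigma(X)$ since every other intersection point is $\sigma(x')$ with $x'$ a proper specialization of $x$, the local ring $\mathcal{O}_{\overline{\{\eta\}},\sigma(x)}=\mathcal{O}_{Y_x,\sigma(x)}$ is a discrete valuation ring with uniformizer $t$ and residue field $\kappa(x)$, and then the two standard residue rules for Milnor $K$-theory conclude — is exactly Rost's computation, and it transfers to the base $S$ unchanged. The local facts you ``grant'' in your final paragraph do hold for precisely the reason you indicate: since $\{t=0\}=\sigma(X)$ scheme-theoretically and $x$ is generic in $\overline{\{x\}}$, the image of $t$ generates the maximal ideal of the one-dimensional reduced local ring of $g^{-1}(\overline{\{x\}})$ at $\sigma(x)$ with quotient $\kappa(x)$, which forces that ring to be a discrete valuation ring equal to $\mathcal{O}_{\overline{\{\eta\}},\sigma(x)}$, so the normalization contributes a single point with trivial norm and there is no gap.
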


We also need the following version of ~\cite[Proposition 6.4]{Rost} for our relative setting.
\begin{proposition}
\label{locvanish}
Let $X$ be a smooth, irreducible, equi-dimensional $S$-scheme of relative dimension $d$. Let $Y\subset X$ be a closed subscheme of codimension $c\geq 1$ such that $\dim Y_s < \dim X_s$.
Let $y$ be a point in $Y$ lying over $s\in S$. Then there is a Nisnevich neighbourhood $X'$ of $y$ in $X$ such that the map
\begin{equation*}
A^p_{Y\times_X X'}(X', \KM_{n})\to A^p(X', \KM_n)
\end{equation*}
is trivial for any $n\geq 0$ and $p\geq 0$.
\end{proposition}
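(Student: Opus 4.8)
The plan is to prove the statement by mirroring Rost's argument for~\cite[Proposition 6.4]{Rost}, but with the general-position step — which over a field uses an infinite residue field — replaced by Gabber's geometric presentation lemma in its relative, Nisnevich-local form. Concretely, I would first use Gabber's lemma to reduce to a situation fibered in curves over an affine base $V:=\A^{d-1}_S$, and then kill the classes supported on $Y$ by transporting them across the deformation-to-the-diagonal homotopy furnished by the boundary Lemma~\ref{boundary}, exactly as in \emph{loc. cit.}

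The first step is to apply the relative presentation lemma to the triple $(X,Y,y)$. The hypotheses are tailored for this: $y$ lies over the closed point $s$, and the fibre-dimension inequality $\dim Y_s<\dim X_s$ is precisely what Gabber's lemma requires. It then produces a Nisnevich neighbourhood $X'\to X$ of $y$ together with a smooth $S$-morphism $p\colon X'\to V=\A^{d-1}_S$ of relative dimension $1$ such that $Y':=Y\times_X X'$ is \emph{finite} over $V$, and such that, near $Y'$, the scheme $X'$ is étale over an open of $\A^1_V$ with the diagonal being principal. This is the input that replaces the dimension-one-specific constructions of~\cite{lueders20}: over a Henselian regular local base $S$ the residue field $\kappa(s)$ may be finite or otherwise too small for the classical moving/Quillen-trick arguments, whereas Gabber's lemma supplies such a presentation uniformly and Nisnevich-locally. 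Since the conclusion is only asserted for a Nisnevich neighbourhood $X'$, no descent back to $X$ is needed.

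Over this presentation I would run the deformation to the diagonal. Form $P:=X'\times_V X'$ with projections $\mathrm{pr}_1,\mathrm{pr}_2\colon P\to X'$; the first is smooth of relative dimension $1$ and the diagonal $\Delta\colon X'\hookrightarrow P$ is a section of it, cut out (by the étale structure over $\A^1_V$) by a global parameter $t$. Applying Lemma~\ref{boundary} to $\mathrm{pr}_1$, the section $\Delta$ and the parameter $t$ yields the two identities $\partial\circ\widetilde{\mathrm{pr}}_1{}^{*}=0$ and $\partial\circ\{t\}\circ\widetilde{\mathrm{pr}}_1{}^{*}=(\mathrm{id})_*$ on $P\setminus\Delta(X')$. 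The key geometric point is that $\mathrm{pr}_1^{-1}(Y')=Y'\times_V X'$ is finite over $X'$ via $\mathrm{pr}_2$, being the base change of the finite morphism $Y'\to V$ along $p$; hence $\mathrm{pr}_{2*}$ is defined on the subcomplex $C^{\Dot}_{Y'}(X',\KM_n)$ of cycles supported along $Y'$. Combining the pullback along $\mathrm{pr}_1$, the twist by $\{t\}$, the boundary $\partial$, and the pushforward $\mathrm{pr}_{2*}$, and using the compatibilities of $d$ with $f_*$, $g^*$ and $\{u\}$ recorded in~\cite[Proposition 4.6]{Rost}, I would assemble a chain homotopy exhibiting the inclusion $C^{\Dot}_{Y'}(X',\KM_n)\hookrightarrow C^{\Dot}(X',\KM_n)$ as null-homotopic. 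This gives the triviality of $A^p_{Y'}(X',\KM_n)\to A^p(X',\KM_n)$ for all $p\geq 0$ and $n\geq 0$.

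I expect the main obstacle to be twofold. The conceptual one is securing the correct relative form of Gabber's presentation lemma over the Henselian regular local base $S$ — in particular the finiteness of $Y'$ over $V$ and the existence of a global parameter $t$ cutting out the diagonal — valid independently of the characteristic and cardinality of $\kappa(s)$; this is the step that genuinely departs from~\cite{lueders20}. The technical one is the bookkeeping of the homotopy: verifying the properness needed to define $\mathrm{pr}_{2*}$ on the with-support complex, identifying the support cohomology $A^{*}_{\Delta(X')}(P,\KM_{*})$ with $A^{*}(X',\KM_{*})$ so that $(\mathrm{id})_*$ is literally the identity, tracking the codimension and weight shifts, and confirming that the two identities of Lemma~\ref{boundary} combine with the compatibilities to produce exactly a null-homotopy of the inclusion of $C^{\Dot}_{Y'}(X',\KM_n)$.
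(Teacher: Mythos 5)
Your proposal is correct and takes essentially the same route as the paper's proof: the relative Gabber presentation lemma (the paper invokes \cite[Theorem 1.1]{DHKS20} and \cite[Theorem 3.4]{DHKS21}) supplies the Nisnevich neighbourhood $X'$ and the relative-dimension-one map $\pi\colon X'\to \A^{d-1}_S$ with $Y'$ finite over the base, after which Rost's chain homotopy $H=p_*\circ j_*\circ\{t\}\circ q^*$, verified via Lemma~\ref{boundary} and the compatibilities of $d$ with $f_*$, $g^*$, $\{u\}$, yields $d\circ H+H\circ d=i_*$ and hence the vanishing. The only cosmetic difference is that you phrase the deformation on $P=X'\times_{\A^{d-1}_S}X'$ with the diagonal section, whereas the paper works directly on $Z=Y'\times_{\A^{d-1}_S}X'$ with the graph section $\sigma$; restricting your construction to the supported subcomplex, where $\mathrm{pr}_1^{-1}(Y')=Z$, $\mathrm{pr}_2|_Z=p$ is finite and the diagonal parameter $t$ cuts out $\sigma(Y')$, recovers the paper's homotopy verbatim.
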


\begin{proof} We follow Rost's proof of Proposition 6.4 in~\cite{Rost} using the Lemma~\ref{boundary}.
By the assumption on $Y$, using~\cite[Theorem 1.1]{DHKS20} and \cite[Theorem 3.4]{DHKS21}, we have a Nisnevich neighbourhood $X'$ of $y$ such that, there is a morphism 
\begin{equation*}\pi: X'\to\A^{d-1}_S
\end{equation*}
of relative dimension 1 which is smooth at $y$ and $\pi|_{Y'}: Y':=Y\times_X X'\to \A^{d-1}_S$ is finite. Let $Z:=Y'\times _{ \A^{d-1}_S} X'$. Let $\sigma: Y'\to Z$ be the section of $q:Z\to  Y'$.
\begin{equation*}
\xymatrix{
Y' \ar[rd]^{\sigma}\ar[rdd]_{~Id_Y}\ar[rrd]^{i}&        &\\
   & Z    \ar[r]^{~p}\ar[d]^{q}  & X' \ar[d]_{\pi}\\
   & Y'    \ar[r]^{\pi|_{Y'}}  & \A^{d-1}_S\\
}
\end{equation*}
Passing to a small open neighbourhood of $X'$, we can assume that $\sigma(Y')\subset Z$ is given by a section $t\in \sO_{Z}(Z)$. Let $Q:=Z-\sigma(Y')$ and $j:Q\to Z$ be the open immersion. Consider the correspondence 
\begin{equation*}
H: Y'\xrightarrow{q^*} Q\xrightarrow{\{t\}} Q\xrightarrow{j_*} Z\xrightarrow{p_*} X'
\end{equation*}
giving a homomorphism 
\begin{equation*}
H: C^{p}_{Y'}(X', \KM_{n})\to C^{p}(X', \KM_{n+1}).
\end{equation*}
such that
\begin{equation}
\label{homM}
d_{X'}\circ H+H\circ d_{Y'}=i_*
\end{equation}
where $i:Y'\inj X'$ is the closed immersion. 
\begin{align*}
d_{X'}\circ H&=d_{X'}\circ p_*\circ j_*\circ \{t\}\circ q^*\\
                 &\overset{\ref{1}}= p_*\circ d_Z\circ j_*\circ \{t\} \circ q^*  \\
                 &\overset{\ref{2}}= p_*\circ(\sigma_*\circ \partial^Q_{Y'}+j_*\circ d_Q)\circ \{t\} \circ q^*\\
                 &= p_*\circ \sigma_*\circ \partial^Q_{Y'}\circ \{t\} \circ q^*+ p_*\circ j_*\circ d_Q\circ \{t\} \circ q^*\\
                 &\overset{\ref{3}}=p_*\circ \sigma_*\circ {id_{Y'}}_*+ p_*\circ j_*\circ (-\{t\}\circ d_Q) \circ q^*\\
                 &\overset{}=i_*+p_*\circ j_*\circ( -\{t\})\circ q^*\circ d_{Y'}\\
                 &=i_*-p_*\circ j_*\circ \{t\}\circ q^*\circ d_{Y'}\\
                 &=i_*-H\circ d_{Y'}.
\end{align*}
\begin{enumerate}[label=\textnormal{(\roman*)}]
\item \label{1} follows since $p$ is proper, hence $d_{X'}\circ p_*=p_*\circ d_Z$~\cite[Prop. 4.6 (1), page 352]{Rost}.
\item\label{2} follows since $\sigma_*\circ\sigma^*+j_*\circ j^*=1_Z$ \cite[(3.10), page 350]{Rost}, thus  
\begin{equation*}
\sigma_*\circ\sigma^*\circ d_Z\circ j_*+j_*\circ j^*\circ d_Z\circ j_*=d_Z\circ j_*.
\end{equation*}
 Since $\partial^Q_{Y'}=\sigma^*\circ d_Z\circ j_*$, we have
$d_Z\circ j_*=\sigma_*\circ\partial^Q_{Y'}+j_*\circ d_Q.$
\item \label{3}First term in \ref{3} follows by  Lemma~\ref{boundary} applied to triple $(Y'\xrightarrow{\sigma}Z\xleftarrow{j}Q).$ The second term in \ref{3} follows since $d_Q\circ \{t\}=-\{t\}\circ d_Q$ \cite[Prop. 4.6 (3), page 353]{Rost}.
\end{enumerate}
Thus, the relation~\eqref{homM} implies that $i_*: A^p_{Y\times_X X'}(X', \KM_{n})\to A^p(X', \KM_n)$ is zero.
\end{proof}

%
%
%

We now prove the main result of this paper, Theorem~\ref{mainthm} in the introduction.
\begin{theorem}\label{main}
Let $X$ be an essentially smooth Henselian local $S$-scheme,  then
\begin{equation*}
 A^p(\KM_n)(X)=0
 \end{equation*}
\text{for} $p\geq \dim S\geq 1$ and $n\geq 0.$
\end{theorem}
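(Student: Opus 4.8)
The plan is to reduce the statement to Proposition~\ref{locvanish} by a continuity (limit) argument, to settle everything by a dimension count except for one boundary phenomenon, and then to remove that boundary obstruction by slicing the base down to a discrete valuation ring. Since $X$ is essentially smooth and Henselian local over $S$, I would first write $X=\Spec \mathcal{O}^h_{X_0,x}$ for a smooth, irreducible, equi-dimensional affine $S$-scheme $X_0$ of relative dimension $r=d-\dim S$ and a point $x$ lying over the closed point $s$, noting that $\mathcal{O}^h_{X_0,x}$ is the filtered colimit of $\mathcal{O}(U)$ over the Nisnevich neighbourhoods $(U,x)\to(X_0,x)$. Because Rost's cycle complex is compatible with such filtered limits and cohomology commutes with filtered colimits, $A^p(\KM_n)(X)=\varinjlim_U A^p(U,\KM_n)$, the colimit running over exactly the Nisnevich neighbourhoods produced by Proposition~\ref{locvanish}. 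Thus it suffices to show that every class $\alpha\in A^p(X_0,\KM_n)$ with $p\geq\dim S$ dies on some Nisnevich neighbourhood of $x$. I would represent $\alpha$ by a cocycle $\tilde\alpha\in C^p(X_0,\KM_n)$; as a cochain it is a finite sum over points of $X_0^{(p)}$, so its support $Y=\mathrm{supp}(\tilde\alpha)$ is closed of pure codimension $p$, and $\tilde\alpha$ already determines a class in $A^p_Y(X_0,\KM_n)$ mapping to $\alpha$. After shrinking $X_0$ I may assume that every component of $Y$ passes through $x$ and that the closed fibre $X_{0,s}$ is integral (it is integral locally at $x$, since $\mathcal{O}_{X_0,x}/\mathfrak m_R$ is regular local).

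Next I would do the dimension bookkeeping that decides when Proposition~\ref{locvanish} applies. Each component $W$ of $Y$ has $\dim W=d-p$, hence $\dim W_s\leq\dim W\leq d-\dim S=\dim X_{0,s}$. As $X_{0,s}$ is integral of dimension $d-\dim S$, equality $\dim W_s=\dim X_{0,s}$ can hold only when $W=X_{0,s}$, which forces $p=\dim S$. Consequently, whenever $p>\dim S$, and also when $p=\dim S$ but $X_{0,s}\not\subseteq Y$, the hypothesis $\dim Y_s<\dim X_{0,s}$ of Proposition~\ref{locvanish} is satisfied. In those cases that proposition produces a Nisnevich neighbourhood $X'$ of $x$ on which $A^p_{Y\times_{X_0}X'}(X',\KM_n)\to A^p(X',\KM_n)$ is trivial, so $\alpha|_{X'}=0$ and we are done.

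The one remaining case is $p=\dim S$ with $X_{0,s}\subseteq Y$; this is exactly where the dimension-one argument of~\cite{lueders20} invoked input special to discrete valuation rings. My proposed route is to remove the special-fibre component of $\tilde\alpha$ by a coboundary before invoking Proposition~\ref{locvanish}. Let $\xi$ be the generic point of $X_{0,s}$, a point of codimension $p=\dim S$, and let $a_\xi\in\KM_{n-p}(\kappa(\xi))$ be the corresponding coefficient of $\tilde\alpha$; it is the only coefficient of $\tilde\alpha$ supported on $X_{0,s}$, since $X_{0,s}$ is irreducible. Choosing a regular system of parameters $t_1,\dots,t_p$ of $R$, set $S'=\Spec R/(t_1,\dots,t_{p-1})$, a one-dimensional regular (hence discrete valuation) subscheme through $s$, and $W=X_0\times_S S'$, which is integral and regular of codimension $p-1$ in $X_0$, with generic point $w$ and with $\xi$ as a codimension-one point. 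The induced residue $\partial^w_\xi\colon\KM_{n-p+1}(\kappa(w))\to\KM_{n-p}(\kappa(\xi))$ is the tame symbol of the discrete valuation ring $\mathcal{O}_{W,\xi}$, hence surjective, so I can pick $\gamma\in C^{p-1}(X_0,\KM_n)$ supported at $w$ with $(d\gamma)_\xi=a_\xi$. Then $\tilde\alpha-d\gamma$ again represents $\alpha$, has trivial $\xi$-component, and all its new components lie in $W$ and dominate $S'$, so each has closed fibre of dimension $d-p-1<\dim X_{0,s}$. Its support therefore meets the hypothesis of Proposition~\ref{locvanish}, and the second paragraph concludes.

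The hard part is precisely this last case: for $p=\dim S$ the support of a cocycle can legitimately contain the entire closed fibre $X_{0,s}$, and the relative Gabber presentation underpinning Proposition~\ref{locvanish} requires $\dim Y_s<\dim X_s$ and so cannot be applied to $Y\supseteq X_{0,s}$ directly. The key idea I would rely on is that cutting the Henselian base down to a discrete valuation ring $S'$ by a partial regular system of parameters converts the obstruction into an ordinary tame symbol, whose surjectivity lets one clear the closed-fibre component; combined with the Gabber-presentation-based Proposition~\ref{locvanish} for the remaining support, this should replace the dimension-one input of~\cite{lueders20}.
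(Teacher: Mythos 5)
Your proposal is correct, and it follows the paper's overall skeleton (write $X=\Spec\mathcal{O}^h_{X_0,x}$, commute $A^p$ with the filtered colimit over Nisnevich neighbourhoods, reduce to representing a class by a cocycle whose support $Y$ satisfies $\dim Y_s<\dim X_{0,s}$, then invoke Proposition~\ref{locvanish}), but at the one genuinely hard step — clearing the component of the support that coincides with the closed fibre when $p=\dim S$ — you take a different route from the paper. The paper invokes the geometric presentation lemma of Saito--Sato \cite[Lemma 7.2]{SS} to produce an integral codimension-$(p-1)$ subscheme $Z$ through the bad point $y$, regular at $y$ and not contained in $U'_s$; since $Z$ may have several codimension-one points $z_1,\dots,z_m,y$ inside the closed fibre, the paper must then run a Chinese-remainder argument in the semi-local ring $\sO_{Z,Z_s^{(1)}}$ to choose lifts $\alpha_i$ that are $1$ at each $z_i$, so that the coboundary of the explicit symbol $\{\alpha_1,\dots,\alpha_n,\pi\}$ kills the coefficient at $y$ without creating new full-fibre components. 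Your canonical slice $W=X_0\times_S S'$ with $S'=\Spec R/(t_1,\dots,t_{p-1})$ achieves the same end more cleanly: after shrinking so that $X_{0,s}$ is irreducible and discarding the (disjoint, since $W$ is regular) connected components of $W$ not through $x$, the special fibre $W_s=X_{0,s}$ is an irreducible principal divisor on $W$, so $\xi$ is the \emph{unique} codimension-one point of $W$ in the fibre, the CRT step becomes vacuous, and abstract surjectivity of the tame symbol of the DVR $\sO_{W,\xi}$ suffices; your dimension count for the new boundary components (each $\-{\{v\}}$ with $v\in W^{(1)}$, $v\neq\xi$, dominates $S'$, so $\dim\-{\{v\}}_s\leq d-p-1$) is valid, using that a proper closed irreducible subset of an irreducible Noetherian scheme has strictly smaller dimension. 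What each approach buys: yours is more self-contained and canonical — it eliminates the dependence on \cite[Lemma 7.2]{SS} at this step (Gabber's presentation lemma is of course still needed inside Proposition~\ref{locvanish}, which both arguments use) and transparently generalizes the dimension-one mechanism of \cite{lueders20}; the paper's version is more flexible, since it handles an arbitrary bad point with $\dim\-{\{y\}}_s=\dim U'_s$ without first shrinking to an irreducible special fibre. Two small repairs you should make explicit: the colimit argument requires running your construction on each Nisnevich neighbourhood $U'$ of $x$ (each of which satisfies the same hypotheses as $X_0$), not only on $X_0$ itself; and in the final step, if $x$ does not lie on the reduced support $Y'$ of $\tilde\alpha-d\gamma$, you conclude by restricting to the open complement of $Y'$ rather than via Proposition~\ref{locvanish}, whose hypotheses ask for a point of $Y'$ over $s$.
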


\begin{proof}The proof follows the ideas in~\cite[Proposition 6.1]{Rost} and~\cite[Theorem 3.1]{lueders20}) with input from Proposition~\ref{locvanish}.

Let $X=U_u^h:=\Spec \mathcal{O}_{U, u}^h$ where $U$ is a smooth, irreducible, equi-dimensional $S$-scheme of dimension $d$ and a point $u$ in $U$ lying over the closed point $s$ in $S$. We have
\begin{equation*}
C^p(X, \KM_n)=\dlim_{(U', u)} C^p(U',\KM_n)
\end{equation*}
and
\begin{equation*}
C^p(U',\KM_n)=\dlim_{Y\subset U'} C^{p}_Y(U', \KM_n)
\end{equation*}
where $Y$ runs over closed subschemes of $U'$ of dimension $d-p$.
 
We have 
\begin{equation*}
A^p(\KM_n)(X)=    \dlim_{(U', u)} A^p(\KM_n)(U')    
\end{equation*}
where $(U', u)$ runs over Nisnevich neighbourhoods of $u$ in $U$.
Thus, every element of  $A^p(U', \KM_n)$ is represented by an element in $C^{p}_Y(U', \KM_n)$
for some closed subscheme $Y$ of $U'$ of dimension $d-p$.

We claim that, every element of $A^{p}(U', \KM_n)$ is represented by an element of $C^{p}_Y(U', \KM_n)$ where $Y$ runs over all closed subschemes of $ U'$ of dimension $d-p$, such that $\dim Y_s<\dim U'_s$ (note that $Y_s\neq \emptyset$ as $u\in Y_s$). 

Let $y\in Y^{(0)}$ such that $\dim \-{\{y\}}_s=\dim U'_s$. If $y\notin U'_s$, then 
\begin{equation*}
d-\dim S=\dim U'_s=\dim \-{\{y\}}_s\leq \dim\-{\{y\}}-1= d-p-1
\end{equation*}
which is impossible, by the assumption on $p$ that $p\geq \dim S$. So we can assume that $y\in U'_s$.

By  \cite[Lemma 7.2, page 1622]{SS} applied to the tuple $(y\in U'_s\subset U')$, 
there exists an integral closed subscheme $Z\subset U'$ of codimension $p-1$  such that 
\begin{enumerate}
\item $ Z\cap (U'- U'_s)\neq \emptyset$
\item $y\in Z$
\item $Z$ is regular at $y$.
\end{enumerate}
By the condition (1) above, $Z-U'_s\subset Z$ is a non-empty, dense, open subset of $Z$. We partition the codimension 1 points of $Z$ as $Z^{(1)}=Z_s^{(1)}\coprod Z_{\eta}^{(1)}$ where
\begin{equation*}
Z_{\eta}^{(1)}:=\Bigl\{\-{\{w\}} ~\text{codimension 1 in $Z$ such that}~ w\in Z-U'_s 
\Bigr\}
\end{equation*}
and 
\begin{equation*}
Z_s^{(1)}:=Z^{(1)}-Z_{\eta}^{(1)}=\Bigl\{\-{\{w\}} ~\text{codimension 1 in $Z$ such that}~ w\in Z\cap U'_s
\Bigr\}.
\end{equation*}
We observe that $\-{\{y\}}\in Z_s^{(1)}$. We claim that $Z_s^{(1)}$ is a finite set and $\-{\{w\}}\in Z_{\eta}^{(1)}$ implies that $\dim \-{\{w\}}_s<\dim U'_s$.

To see the claim, first we have by assumption that $\dim (\-{\{y\}})_s=\dim U'_s$. (This implies that a maximal component of $\-{\{y\}}_s$ coincides with a maximal component of $U'_s$.) 

By earlier discussion, we have $y\in U'_s$, so that  $\-{\{y\}}\subset U'_s$. Let $w\in Z^{(1)}$ such that $w\in Z\cap U'_s$. Then $d-p=\dim \-{\{w\}}\leq\dim Z\cap U'_s\leq d-p.$ Thus, $\-{\{w\}}$ is a component of  $Z\cap U'_s$. Since $Z\cap U'_s$ has finitely many components, $Z_s^{(1)}$ is a finite set. We write $Z_s^{(1)}=\{z_1, \dots, z_m, y\}$.

Further, we observe that  $\-{\{w\}}\in Z_{\eta}^{(1)}$ implies that $\dim \-{\{w\}}_s<\dim U'_s$. 
Otherwise, let $\-{\{w\}}\in Z_{\eta}^{(1)}$ be such that $\dim \-{\{w\}}_s=\dim U'_s$. Then
$\dim \-{\{w\}}_s\leq \dim \-{\{w\}}=d-p=\dim U'_s$. This implies that  $\dim \-{\{w\}}_s= \dim \-{\{w\}}$. Hence $w\in \-{\{w\}}_s\subseteq U'_s$, which is impossible since $w\notin U'_s$. 


Now, let $\{\-\alpha_1,\-\alpha_2, \dots, \-\alpha_n\}\in \KM_n(\kappa(y)).$ Let $\fr p_i$ (resp. $\fr p_y$) be the maximal ideals corresponding to $z_i$ (resp. $\-{\{y\}}$) in the semi-local ring $\sO_{Z, Z^{(1)}_s}$. 
Since the ideals $\fr p_1,\dots, \fr p_m, \fr p_y$ are mutually coprime in $\sO_{Z, Z^{(1)}_s}$, the canonical map  
\begin{equation*}
\sO_{Z, Z^{(1)}_s}\to \sO_{Z, Z^{(1)}_s}/\fr p_y\times \prod_{i=1}^m \sO_{Z, Z^{(1)}_s}/\fr p_i
\end{equation*}
 is surjective. Let $\alpha_i\in \sO_{Z, Z^{(1)}_s}$ such that $\alpha_i\mapsto \-\alpha_i$ in $k(\-{\{y\}})^*$ and 1 in  $k(z_i)^*$.
Let $\pi$ be a local parameter of $Z$ at $y$. Then under the differential
\begin{equation*}
d=(d^z_w)_{w\in Z^{(1)}}: \KM_{n+1}(k(z))\to \Bigg(\bigoplus_{i=1}^m \KM_{n}(k(z_i))\Bigg) \oplus \KM_{n}(k(y))\oplus \Bigg(\bigoplus_{w\in Z_{\eta}^{(1)}} \KM_{n}(k(w))\Bigg)
\end{equation*}
\begin{equation*}
d(\{\alpha_1,\alpha_2, \dots, \alpha_n, \pi\})\mapsto (0+ \dots+ 0+\{\-\alpha_1,\-\alpha_2, \dots, \-\alpha_n\})+ \sum_{w\in Z_{\eta}^{(1)}} d^{z}_w(\{\alpha_1,\alpha_2, \dots, \alpha_n, \pi\}).
\end{equation*}
We observe that $\sum_{w\in Z_{\eta}^{(1)}} d^{z}_w(\{\alpha_1,\alpha_2, \dots, \alpha_n, \pi\})\in C^{p}(U', \KM_n)$ such that $W=\-{\{w\}}$ is a codimension $p$ closed subscheme such that $\dim W_s<\dim U'_s$.
Thus, we have that any element of $A^{p}(U', \KM_n)$ is represented by 
an element in $A^{p}_Y(U', \KM_n)$
for some closed subscheme $Y\subset U'$ of codimension $p$ such that $\dim Y_s<\dim U'_s$. 
Now by Proposition~\ref{locvanish}, for some $(U'', z)$ a Nisnevich neighbourhood of $z$ in $U'$, 
\begin{equation*}
 A^{p}_{Y\times_{U'} U''}(U'', \KM_n)\to A^{p}(U'', \KM_n)
 \end{equation*}
is zero map. 

Thus, the map  
\begin{equation*}
A^{p}_Y(U', \KM_n)\to A^{p}_{Y\times_{U'} U''}(U'', \KM_n)\to A^{p}(U'', \KM_n)
\end{equation*}
is zero implies that
\begin{equation*}
A^{p}_Y(U', \KM_n)\to   A^p(U', \KM_n)\to A^{p}(U'', \KM_n)
\end{equation*}
is the zero morphism.

This shows that 
\begin{equation*}
A^p(\KM_n)(X)=    \dlim_{(U', u)} A^p(\KM_n)(U') =0.
\end{equation*}
\end{proof}

\begin{remark}
It is worthwhile, to point out that the above proof fails for degrees $p<\dim S$,  since for example, $Z$ constructed as in the proof above, can have infinitely many codimension 1 points containing the closed fiber $U'_s$, which makes the set $Z_s^{(1)}$ infinite and the application of the Chinese remainder theorem impossible. More concretely, 
consider $S=\Spec\Z_l\llbracket x\rrbracket$ for a prime $l$, an indeterminate $x$ and the ring of $l$-adic integers $\Z_l$. Take  $Z=U'=\A^1_S=\Spec \Z_l\llbracket x\rrbracket[t]$. Now for each $a\in \Z_l$, $Z$ has codimension 1 point $z_a$ associated to the ideal $(l+ax),$ so that $\-{\{z_a\}}$ contains the closed fiber $U'_s$ which is given by the ideal $(l, x)$.
\end{remark}

\end{document}